\newtheorem{theorem}{Theorem}[section]
\newtheorem{corollary}[theorem]{Corollary}
\newtheorem{lemma}[theorem]{Lemma}
\newtheorem{proposition}[theorem]{Proposition}
\newtheorem{remark}[theorem]{Remark}
\def\r{\mathbb{R}}
\def\h{\mathbb{H}}
 \def\n{\mathbf{n}}
  \def\o{\mathbf{O}}
\title{Gradient estimates for the   constant mean curvature equation in hyperbolic space}
\author{\textbf{Rafael L\'opez}
\\
Departamento de Geometr\'{\i}a y Topolog\'{\i}a\\ Instituto de Matem\'aticas (IEMath-GR)\\
 Universidad de Granada\\
 18071 Granada, Spain}
 \date{}
\begin{document}

%\MSdates[`Received date']{`Accepted date'}
 %\label{firstpage}
 
 \maketitle
 
   \begin{abstract} 
   We establish gradient estimates for solutions  to the Dirichlet problem for  the constant mean curvature  equation in hyperbolic space. We obtain these estimates on bounded strictly convex domains  by using  the maximum principles theory of $\Phi$-functions of Payne and Philippin. These   estimates are then employed to solve the Dirichlet problem when the mean curvature $H$ satisfies $H<1$ under suitable boundary conditions.    
 \end{abstract}

AMS 2010 Classification Number: 35J62, 35J25, 35J93, 35B38, 53A10\\
Keywords: hyperbolic space,   Dirichlet problem,      maximum principle, critical point

%%%%%%%%%%%%%%%%%%%%%%5
\section{Introduction and statement of the results}
%%%%%%%%%%%%%%%%%%%%%%%%%%%%

  In this paper we consider the Dirichlet problem for the constant mean curvature equation on  a domain of a horosphere in three-dimensional hyperbolic space $\h^3$. In order to fix the terminology,   we consider the upper  halfspace model of   ${\mathbb H}^3$, that is, $\r^3_{+}=\{(x_1,x_2,x_3)\in\r^3:x_3>0\}$ endowed with the hyperbolic metric $g=g_0/x_3^2$,  $g_0$ being the Euclidean metric. After a rigid motion of $\h^3$, a horosphere can be expressed as a horizontal plane $P_a$ of equation $x_3=a$, $a>0$.   Let $\Omega$ be a domain of $P_a$, where we identify $\Omega$ with its orthogonal projection $\Omega\times\{0\}$ on the plane $x_3=0$. We study  the Dirichlet problem
  \begin{eqnarray}
&&\mbox{div}\left(\dfrac{Du}{\sqrt{1+|Du|^2}}\right)=-\frac{2}{u}\left(\frac{1}{\sqrt{1+|Du|^2}}-H\right)\  \mbox{in $\Omega$}\label{eq1}\\
&&u=a>0\ \mbox{on $\partial\Omega,$}\label{eq2}
\end{eqnarray}
  where $u>0$ is a smooth function in $\Omega$, $H\in\r$ is a constant and $D$ and $\mbox{div}$ denote the gradient and the divergence operators in the Euclidean plane $\r^2$.  The graph  $\Sigma_u=\{(x,u(x)):x\in\Omega\}$,  $x=(x_1,x_2)$, represents a surface in $\h^3$ with constant mean curvature $H$  computed with respect to the upwards orientation. The study of the solutions of the Dirichlet problem (\ref{eq1})-(\ref{eq2}) depends strongly of the relation between $H$ and the value $1$, the modulus of the sectional curvature $-1$ of  ${\mathbb H}^3$. For example,  if $H<1$ ($H>1$), then $\Sigma_u$ lies above the horosphere $P_a$ (respectively below $P_a$) and the geometric behaviour of $\Sigma_u$ in both cases is completely different: let us observe that in hyperbolic geometry, the translations along the   $x_3$-coordinate are not isometries of ${\mathbb H}^3$.

In this article,  we will use the theory of maximum principles developed by Payne and Philippin to obtain estimates of the gradient for a solution of (\ref{eq1})-(\ref{eq2}). We derive estimates of the gradient $|Du|$ in terms of $C^0$ bounds   of $u$.

\begin{theorem}\label{t-du}
 Let  $\Omega\subset\r^2$ be a bounded strictly convex domain. Let $u$ be a solution of   (\ref{eq1})-(\ref{eq2}) and denote $u_M=\sup_\Omega u$ and 
 $$C=\frac{1}{1-H}\frac{u_M^2}{a^2}\cdot$$
 If $0\leq H<1$ or  if $H<0$ with
 \begin{equation}\label{um}
 u_M<\sqrt{\frac{H-1}{H}}a,
 \end{equation}
 then   
 \begin{equation}\label{duu}
 |Du| \leq \frac{\sqrt{C^2-(1+HC)^2}}{1+HC}\quad\mbox{in $\Omega$.}
\end{equation}

\end{theorem}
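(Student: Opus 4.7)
The plan is to apply the Payne--Philippin method of $\Phi$-functions. Setting $W := \sqrt{1+|Du|^2}$, I note that the target estimate is equivalent to $W/(1-HW) \le u_M^2/(a^2(1-H))$; this suggests introducing the auxiliary function
\[
\Phi = \frac{u^2\,W}{1-HW}.
\]
This $\Phi$ is well defined under the hypotheses of the theorem: for $H<0$, $1-HW>0$ is automatic, while the assumption (\ref{um}) is precisely the condition $1+HC>0$ that keeps the target estimate finite. The design is motivated by the fact that at an interior critical point of $u$ one has $W=1$, hence $\Phi = u^2/(1-H)$, whose supremum over critical points of $u$ is $u_M^2/(1-H)$.

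\textbf{A priori bound $u\ge a$.} As a preliminary step, I would show that $u\ge a$ in $\Omega$ by applying the maximum principle directly to $u$: at an interior minimum one has $Du=0$ and $\Delta u \ge 0$, whereas the equation forces $\Delta u = 2(H-1)/u < 0$ since $H<1$, a contradiction. Thus the minimum of $u$ is attained on $\partial\Omega$ and equals $a$. This inclusion is later needed to convert the pointwise bound coming from $\Phi$ into a uniform bound independent of $u$.

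\textbf{Elliptic inequality for $\Phi$.} The main task is to verify that $\Phi$ satisfies an elliptic maximum principle. I would first rewrite the PDE in non-divergence form $a^{ij}u_{ij}=B$, with $a^{ij}=W^2\delta_{ij}-u_iu_j$ and $B=2(HW^3-W^2)/u$, and then expand $L\Phi = a^{ij}\Phi_{ij}$. Using the PDE to eliminate the combinations of second derivatives of $u$ that appear, I expect to obtain an inequality of the form $L\Phi \ge \langle b, D\Phi\rangle$ for some vector field $b$. The specific form $u^2\,W/(1-HW)$ is engineered so that the troublesome lower-order terms cancel, and the sign hypotheses on $H$ (including the role of (\ref{um}) when $H<0$) are what guarantee that the residual terms have the correct sign. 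This calculation is the main technical obstacle of the proof.

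\textbf{Max principle on a convex domain and conclusion.} Once the differential inequality holds, the strong maximum principle gives that $\Phi$ attains its maximum on $\overline\Omega$ either at an interior critical point of $u$ or on $\partial\Omega$. At a boundary maximum, Hopf's lemma would force $\partial_\nu\Phi>0$; on the other hand, the Payne--Philippin identity for the normal derivative of $\Phi$ on $\partial\Omega$, using that $u\equiv a$ on $\partial\Omega$ together with the strict convexity of $\partial\Omega$, gives $\partial_\nu\Phi\le 0$. This contradiction rules out a boundary maximum, so the maximum occurs at an interior critical point of $u$, where $\Phi = u^2/(1-H) \le u_M^2/(1-H)$. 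Consequently $u^2 W/(1-HW) \le u_M^2/(1-H)$ throughout $\Omega$, and combining with $u\ge a$ yields $W/(1-HW) \le C$, equivalently $W \le C/(1+HC)$; squaring and subtracting $1$ produces exactly the claimed estimate on $|Du|$.
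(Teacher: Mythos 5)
Your route is essentially the paper's: the function $u^2W/(1-HW)$ is the exponential of one half of the Payne--Philippin function $\Phi(x;2)=\log\bigl(\tfrac{1+q^2}{(1-H\sqrt{1+q^2})^2}\,u^{4}\bigr)$ used there; the convexity/Hopf boundary argument you sketch is precisely \cite[Corollary 1]{pp}; your interior-minimum computation for $u\geq a$ is a legitimate replacement for the horosphere comparison of Lemma \ref{l1}; and the closing algebra coincides. (One small improvement on your side: by bounding $u\le u_M$ at whatever critical point carries the maximum of $\Phi$, you avoid invoking the uniqueness of the critical point, which the paper takes from Theorem \ref{t1}.) A presentational weakness is that the central differential inequality $L\Phi\geq\langle b,D\Phi\rangle$ is only announced (``I expect to obtain''); you should either perform that computation or cite \cite[Corollary 1]{pp} explicitly, since that is where the admissibility conditions on $g$, $\rho$, $f$ enter.

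There is, however, a genuine gap in the case $0<H<1$. You verify $1-HW>0$ only for $H<0$, where it is automatic, and you relate (\ref{um}) to $1+HC>0$; but for $0<H<1$ you never establish $1-H\sqrt{1+|Du|^2}>0$. This positivity is indispensable three times over: it is the condition $\rho(q^2)>0$ required by the Payne--Philippin framework (without it your $\Phi$ may be singular or change sign and the maximum principle argument collapses); it controls the sign of the residual terms in the elliptic inequality; and it is needed at the end to pass from $W/(1-HW)\leq C$ to $(1+HC)W\leq C$ --- if $1-HW<0$ the first inequality is vacuously true and yields nothing. It cannot be read off the equation pointwise, nor recovered a posteriori from the conclusion. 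The paper supplies it through a separate geometric input, Lemma \ref{l2}, namely the a priori bound $|Du|^2\leq(1-H^2)/H^2$ for constant mean curvature graphs over convex domains, extracted from \cite[Theorem 4.1]{lm} via the Minkowski model. Without this (or an equivalent a priori bound on $H\sqrt{1+|Du|^2}$), your proof is incomplete for $0<H<1$.
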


If we have estimates for the gradient of solutions of (\ref{eq1})-(\ref{eq2}),   it is natural to address  the problem of  the  existence of solutions of the Dirichlet problem. In the context of the hyperbolic space, the results of existence require   some assumption on the convexity of the domain $\Omega$.  If $0\leq H<1$, the convexity of $\partial\Omega$ is enough to ensure the existence of a solution of (\ref{eq1})-(\ref{eq2}): see \cite{lo0,ns}. However, if $H<0$,  the mere convexity of $\Omega$ does not ensure the existence of solutions and it is required stronger convexity. More exactly,   the solvability of the Dirichlet problem    (\ref{eq1})-(\ref{eq2}) was proved if the curvature $\kappa$ of $\partial\Omega$ satisfies $-k<H<0$  (\cite[Th. 1.1]{lm}). For other existence results, see \cite{dl,li,lo1,st}. As a consequence of Theorem \ref{t-du}, we establish the following existence result.

  \begin{theorem}\label{t-ex}Let $\Omega\subset\r^2$ be a bounded strictly convex domain.  Let $2R$ be the  diameter of   $\partial\Omega$. If $-1\leq H<0$ satisfies     
\begin{equation}\label{hhh}
R^2<-2-\frac{1}{H}+2\sqrt{\frac{H}{H-1}},
\end{equation}
then there exists a unique solution of (\ref{eq1})-(\ref{eq2}).
  \end{theorem}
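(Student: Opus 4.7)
The natural approach is the method of continuity. I would embed (\ref{eq1})--(\ref{eq2}) in the one-parameter family obtained by replacing $H$ with $H_t = tH$ for $t \in [0,1]$, and let $I \subset [0,1]$ denote the set of parameters for which the associated problem admits a smooth solution $u_t$. The case $t = 0$ (minimal surface equation in $\h^3$) is solvable on strictly convex domains by the results cited in \cite{lo0,ns}, so $0 \in I$. Openness of $I$ follows from the implicit function theorem applied to the linearisation of the quasilinear operator in (\ref{eq1}) at a smooth solution. Closedness is the heart of the matter and requires uniform a priori $C^{1,\alpha}$ estimates along the path, which by classical elliptic regularity reduce to uniform $C^0$ and $C^1$ estimates.

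For the $C^0$ estimate, the lower bound $u_t \geq a$ on $\Omega$ is a direct consequence of the comparison principle, since the constant function $u \equiv a$ is a strict supersolution of (\ref{eq1}) for every $H_t < 1$ and agrees with $u_t$ on $\partial\Omega$. For the upper bound I would construct a geometric upper barrier from an umbilical CMC surface of $\h^3$ with mean curvature $H_t$. In the upper halfspace model these umbilical surfaces are portions of Euclidean spheres, and for $H_t < 0$ the appropriate choice is the upper cap of a Euclidean sphere whose centre lies below the ideal boundary. Selecting the centre and radius so that this cap is a graph over a Euclidean disk of radius $R$ containing $\Omega$ and attains the value $a$ on the bounding circle, comparison with $u_t$ yields an explicit height bound $\sup_\Omega u_t \leq M(R, H_t, a)$.

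The algebraic content of condition (\ref{hhh}) is precisely that this bound satisfies $M(R, H_t, a) < \sqrt{(H_t-1)/H_t}\,a$ uniformly for every $t \in (0,1]$, which is the hypothesis (\ref{um}) of Theorem \ref{t-du}. Theorem \ref{t-du} then supplies a uniform interior gradient estimate. The boundary gradient estimate is derived by the standard construction of one-sided local barriers near $\partial\Omega$, exploiting the strict convexity of $\partial\Omega$ to control the normal derivative of $u_t$ on the boundary. Combining the interior and boundary estimates gives a uniform $C^1$ bound, which classical Schauder theory upgrades to a uniform $C^{1,\alpha}$ bound, closing the continuity argument. Uniqueness is a direct consequence of the comparison principle for the operator in (\ref{eq1}).

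The main obstacle is the second step: identifying the right umbilical barrier and verifying that the sharp algebraic condition which simultaneously permits the cap to be a graph over a disk of radius $R$ and forces its maximal height to lie strictly below the threshold $\sqrt{(H-1)/H}\,a$ required by Theorem \ref{t-du} is exactly (\ref{hhh}). This calculation couples the geometric data of the barrier sphere with the gradient threshold (\ref{um}), and it is the interplay of these two ingredients that dictates the specific form of the diameter bound in (\ref{hhh}).
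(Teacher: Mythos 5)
Your proposal follows essentially the same route as the paper: the continuity method in the parameter $\tau H$, the lower bound $u>a$ by comparison, the upper bound $u_M<m(1-H)$ from an umbilical spherical-cap barrier over the circumscribed disk of radius $R$, and the identification of (\ref{hhh}) as exactly the condition making that height bound fall below the threshold (\ref{um}) so that Theorem \ref{t-du} supplies the (global, hence also boundary) gradient estimate --- your separate boundary-barrier step is therefore redundant. Two small corrections: for $H<0$ the barrier cap $w(r)=c_0+\sqrt{m^2-r^2}$ has its centre at height $c_0=-mH>0$, i.e.\ above the ideal boundary rather than below it; and the decisive computation you flag as the main obstacle --- solving $m^2=(1+mH)^2+R^2$ for $m$ and checking $m(1-H)<\sqrt{(H-1)/H}$ against (\ref{hhh}) --- is asserted rather than carried out, though it does go through exactly as you predict.
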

We notice that we need to assume that  the diameter of $\Omega$ is   small in relation with the value of $H$ but, in contrast, it is not necessary strong convexity of $\partial\Omega$ and  we allow that the existence of regions of $\partial\Omega$ whose curvature   is closed to $0$. 

Theorems \ref{t-du} and \ref{t-ex} will be proved in $\S$  \ref{sec3}. In the proof of these results, we need to show the uniqueness of critical points of solutions of  (\ref{eq1})-(\ref{eq2}). Although this may be expected because the resemblance of (\ref{eq1}) with other quasilinear elliptic equations, as for example, the capillary equation (\cite{ba,be,el}) or the singular minimal surface equation (\cite{lo2}), we prove this uniqueness only in the range of values $H<1$, which is enough for our purposes: see Theorem \ref{t1} in $\S$ \ref{sec2}.  Finally we prove in $\S$ \ref{sec4}   an estimate from below of the global maximum $u_M$ of a solution of (\ref{eq1})-(\ref{eq2}) when $H<1$.  In general, estimates of $u$ are obtained by comparing $u$ with known solutions of (\ref{eq1}), as for example, radial solutions. However, our result  establishes a lower estimate of the value $u$ at the critical point in terms of the curvature of $\partial\Omega$ and $H$: see Theorem \ref{t3}.

 %%%%%%%%%%%%%%
 \section{Uniqueness of critical points}\label{sec2}
 %%%%%%%%%%%%%%%%%%%%%%%
 
  The first result in this paper establishes, under some hypothesis, the uniqueness of critical points of a solution of the Dirichlet problem.  The topic  on  the number of critical points  of solutions for elliptic equations is a subject of high interest and the literature is very extensive, especially related with  the question of the convexity of level sets of solutions of elliptic equations.   In the context of the constant mean curvature equation in Euclidean space, and if the domain is convex,  Sakaguchi proved the existence of a unique critical point assuming Dirichlet boundary condition or Neumann boundary condition (\cite{sa}). In this   paper we address this problem for the constant mean curvature equation  in hyperbolic space when $H<1$.  
  
 \begin{theorem}\label{t1}
 Let $\Omega\subset\r^2$ be a bounded strictly convex domain and let $H\in\r$.  If $H<1$, then  a solution $u$ of (\ref{eq1})-(\ref{eq2}) has exactly one critical point, which coincides with the point where $u$ attains its global maximum.
\end{theorem}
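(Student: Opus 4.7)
The plan is a three-step scheme: first, eliminate interior minima and secure $u>a$ on $\Omega$; second, show every interior critical point is a strict local maximum; third, count using superlevel sets.

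Step one is essentially algebraic. At any interior critical point $p$ we have $Du(p)=0$ and $W(p)=\sqrt{1+|Du(p)|^2}=1$, so equation (\ref{eq1}) reduces to $\Delta u(p) = -\frac{2}{u(p)}(1-H)$, which is strictly negative because $H<1$ and $u>0$. Consequently $u$ admits no interior minimum. Combined with $u=a$ on $\partial\Omega$, the strong maximum principle yields $u>a$ throughout $\Omega$, and in particular the global maximum $u_M$ is realised at some interior critical point.

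Step two is the crux. So far only the trace of $D^2u$ at a critical point is controlled; this does not preclude indefinite (saddle-type) Hessians with negative trace. To rule out saddles, my plan is to deploy a Payne--Philippin $\Phi$-function of the form $\Phi = f(|Du|^2) + g(u)$, choose $f$ and $g$ so that the linearisation $L$ of (\ref{eq1}) satisfies $L\Phi\ge 0$ in $\Omega$, and then invoke the strong maximum principle on $\Phi$. Combined with the form of the equation at the critical point, this should force $D^2u$ there to be negative definite. A parallel route, should the $\Phi$-function prove recalcitrant, is to adapt the Korevaar--Sakaguchi quasiconcavity method to establish convexity of the superlevel sets $\{u>c\}$, whence strict local maximality of each interior critical point follows.

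Step three is topological. For $c\in(a,u_M)$ consider $\Omega_c := \{x\in\Omega : u(x)>c\}$. Every connected component of $\Omega_c$ contains at least one local maximum of $u$. By Step two, all interior critical points are strict local maxima (there are neither saddles nor interior minima), so as $c$ decreases the components of $\Omega_c$ can only appear (as $c$ crosses a local maximum value from above) and never merge (which would require a saddle critical level). But for $c$ just above $a$, $\Omega_c$ is connected, since it exhausts the connected (indeed convex) domain $\Omega$. Hence $\Omega_c$ has a single component for every $c\in(a,u_M)$, and $u$ has exactly one critical point, at which it attains $u_M$.

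The main obstacle is Step two. The equation at a critical point prescribes only the trace of the Hessian, and excluding indefinite Hessians of negative trace requires genuine additional input. The argument therefore hinges on either selecting an effective $\Phi$-function for (\ref{eq1}) or verifying that the Korevaar-type level-set convexity machinery accommodates the nontrivial $u$-dependence on the right-hand side of (\ref{eq1}).
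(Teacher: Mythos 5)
Your Step one is fine and even a little slicker than the paper's Lemma \ref{l1} (which uses a sweeping comparison with horospheres): at a critical point (\ref{eq1}) reduces to $\Delta u=-\tfrac{2}{u}(1-H)<0$, so there is no interior minimum, $u>a$ in $\Omega$, and the maximum is attained at an interior critical point. The genuine gap is Step two, and you have identified it yourself without closing it: nothing in your proposal actually excludes saddle points. The equation controls only the trace of $D^2u$ at a critical point, and neither of the two routes you gesture at is carried out. A $\Phi$-function of the form $f(|Du|^2)+g(u)$ satisfies a differential inequality whose maximum principle locates the extrema of $\Phi$ (at a critical point of $u$ or on $\partial\Omega$); it yields pointwise bounds on $|Du|$ in terms of $u$, but it does not see the signature of the Hessian at a critical point, so ``this should force $D^2u$ to be negative definite'' is not a proof. (Where the paper does prove $u_{11}<0$ and $u_{22}<0$ at the critical point --- in Proposition \ref{t-min} --- it does so \emph{after} uniqueness is known, using the nodal-line structure, so that route would be circular here.) The Korevaar--Sakaguchi alternative is likewise only named, and its adaptation to the $u$-dependent right-hand side of (\ref{eq1}) is exactly the nontrivial content you would have to supply.

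The paper avoids classifying the Hessian altogether. It differentiates (\ref{eq1}) to obtain the linear elliptic equation (\ref{eq33}) satisfied by every directional derivative $v(\theta)=Du\cdot(\cos\theta,\sin\theta)$; the maximum principle applies to it because the zero-order coefficient is non-positive, i.e.\ $1-H\sqrt{1+|Du|^2}\ge 0$, which for $0<H<1$ requires the a priori gradient bound (\ref{esh}) of Lemma \ref{l2} --- an ingredient entirely absent from your plan. Strict convexity of $\partial\Omega$ makes the Gauss map injective, so $v(\theta)$ has exactly two boundary zeros, and the maximum principle forces its nodal set to be a single arc joining them. If there were two critical points $P_1,P_2$, both would lie on every nodal arc $\mathcal{N}_\theta$; orienting the arcs to pass first through $P_1$ and then $P_2$ and letting $\theta$ run from $0$ to $\pi$ interchanges the endpoints while $v(\pi)=-v(0)$, which manufactures a second nodal line and a contradiction. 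If you want to salvage your scheme, this nodal-line argument (or Sakaguchi's) is the missing engine; once it is in place your Step three becomes superfluous.
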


We prove this result as a consequence of the following arguments. 

A first step consists in proving the existence of at least one critical point of a solution $u$ of (\ref{eq1})-(\ref{eq2}). When $H\leq 0$, this is achieved by the Hopf maximum principle. Indeed, because the right-hand side of (\ref{eq1}) is non positive, the minimum of $u$ is attained at some boundary point, proving $u>a$ in $\Omega$. Since $\Omega$ is bounded, the function $u$ has a global maximum at some interior point. 

This argument fails if $0<H<1$. For this range of values of $H$ (also if $H\leq 0$) we will use a {\it comparison principle}  based in the standard theory of quasilinear elliptic equations (\cite[Th. 10.1]{gt}). In our context, it can be formulated as follows:   if  two surfaces $\Sigma_1$ and $\Sigma_2$ have a common interior point $p$ and with constant mean curvature $H_1$ and $H_2$, respectively,  with respect to the orientations that coincide at $p$,   if $\Sigma_1$ lies above $\Sigma_2$ around $p$, then $H_1\geq H_2$ (the same conclusion holds if $p$ is a common boundary point with tangent boundaries at $p$): see \cite[p. 194]{lo}.

 \begin{lemma}\label{l1}
  Suppose $\Omega\subset\r^2$ is a bounded domain. If $H<1$, then a solution of (\ref{eq1})-(\ref{eq2}) satisfies $u>a$ in $\Omega$.
 \end{lemma}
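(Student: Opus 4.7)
My approach would be to apply the geometric comparison principle just recalled, comparing the graph $\Sigma_u$ with the horospheres $P_t = \{x_3 = t\}$. Each $P_t$ is totally umbilical and has constant mean curvature $1$ with respect to its upward hyperbolic unit normal (as is consistent with (\ref{eq1}) evaluated on the constant solution $u \equiv t$), while $\Sigma_u$ has constant mean curvature $H < 1$ with respect to its own upward normal.

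First I would establish the weak inequality $u \geq a$. Let $m = \min_{\bar\Omega} u$ and suppose, for contradiction, that $m < a$. Since $u \equiv a$ on $\partial\Omega$, the minimum is attained at some interior point $x_0 \in \Omega$, where $Du(x_0) = 0$. Then $\Sigma_u$ and $P_m$ meet at the point $p = (x_0, m)$ with a common horizontal tangent plane and matching upward orientations, and $\Sigma_u$ lies above $P_m$ in a neighborhood of $p$ because $u \geq m$. The comparison principle stated before the lemma therefore yields
\begin{equation*}
H = H_{\Sigma_u}(p) \geq H_{P_m}(p) = 1,
\end{equation*}
contradicting $H < 1$. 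Hence $m \geq a$.

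Second, I would promote this to the strict inequality. If $u(x_0) = a$ for some interior $x_0 \in \Omega$, then by the previous step $x_0$ is still an interior minimum, so $Du(x_0) = 0$, and the same touching argument applied to $\Sigma_u$ and $P_a$ produces $H \geq 1$, again a contradiction; thus $u > a$ throughout $\Omega$.

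I do not expect any substantial obstacle. The argument admits a completely direct analytic translation: at any interior minimum $x_1$ with value $m$ one has $Du(x_1) = 0$ and $\Delta u(x_1) \geq 0$, while (\ref{eq1}) evaluated at $x_1$ reads $\Delta u(x_1) = -(2/m)(1-H) < 0$, an immediate contradiction. The only point requiring care is to check, at the touching configuration, that the tangent planes of $\Sigma_u$ and the horosphere really coincide and that both upward orientations agree at $p$, so that the comparison principle applies exactly as formulated.
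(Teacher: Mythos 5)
Your argument is correct and is essentially the paper's own proof: both rest on touching $\Sigma_u$ from below with the horosphere $P_{b}$ at an interior minimum and invoking the comparison principle to force $H\geq 1$. Your extra analytic remark (that $Du=0$ and $\Delta u\geq 0$ at an interior minimum contradict $\Delta u=-\tfrac{2}{u}(1-H)<0$ from (\ref{eq1})) is also valid and gives a self-contained shortcut, but it does not change the substance of the approach.
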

 \begin{proof}   The proof is by contradiction. Suppose that there exists   $x_0\in \Omega$ such that $u$ attains the minimum value, $u(x_0)\leq a$.    Let $p=(x_0,u(x_0))$. For $b<u(x_0)$, consider the  the horosphere $P_b$ of equation $x_3=b$,  whose mean curvature is $1$ with respect to the upwards orientation. Then we move up $P_b$ by letting $b\nearrow\infty$, until the first touching point with $\Sigma_u$ at $b_1=u(x_0)$. Then the horosphere $P_{b_1}$ touches $\Sigma_u$ at   $p$, which is an interior point of both   $\Sigma_u$ and $P_{b_1}$. As $\Sigma_u$  lies above $P_{b_1}$, we arrive a contradiction with the comparison principle.  
 \end{proof}
  
Once proved this lemma, we follow with the proof of Theorem \ref{t1}.  Denote $u_k=\partial u/\partial x_k$, $k=1,2$,  and consider the summation convention of repeated indices. Equation (\ref{eq1}) can be expressed as
 $$(1+|Du|^2)\Delta u-u_iu_ju_{ij}+\frac{2(1+|Du|^2)}{u}-\frac{2H(1+|Du|^2)^{3/2}}{u}=0.$$
 Denote $v^k=u_k$, $k=1,2$, and we differentiate the above identity with respect to $x_k$ obtaining:
  \begin{eqnarray}
 &&\left((1+|Du|^2)\delta_{ij}-u_iu_j\right)v_{ij}^k+2\left(u_i\Delta u-u_ju_{ij}+\frac{2 u_i}{u}-\frac{3Hu_i}{u}(1+|Du|^2)^{1/2}\right)v_i^k\nonumber\\
 &&-\frac{2(1+|Du|^2)}{u^2}(1-H\sqrt{1+|Du|^2})v^k=0\label{eq33}
 \end{eqnarray}
 for $k=1,2$ and where $\delta_{ij}$ is the Kronecker delta. Equation (\ref{eq33}) is a linear elliptic equation in $v^k$. We   need to  apply the Hopf Maximum Principle  (\cite[Th. 3.5]{gt}) to this equation. Then we have to know   that the term of $v^k$ is non positive, or equivalently,  
  \begin{equation}\label{ineh}
  1-H\sqrt{1+|Du|^2} \geq 0\quad \mbox{in $\Omega$.}
  \end{equation}
  If $H\leq 0$, this inequality is clear. If $0<H< 1$,  one needs to   estimate   $|Du|$ in terms of $H$. For this, we prove the next lemma, which is implicitly contained in the proof of the main result in \cite{lm}.

 \begin{lemma}\label{l2}
Let $\Omega\subset\r^2$ be a bounded strictly convex domain of $\r^2$ and let $0<H< 1$. If $u$ satisfies (\ref{eq1})-(\ref{eq2}), then
\begin{equation}\label{esh}
|Du|^2\leq\frac{1-H^2}{H^2}\cdot
\end{equation}
  \end{lemma}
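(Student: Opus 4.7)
The estimate (\ref{esh}) is equivalent to $H\sqrt{1+|Du|^2}\leq 1$, i.e., the angle $\theta$ between the upward unit normal of $\Sigma_u$ and the vertical satisfies $\cos\theta\geq H$ throughout $\Omega$. My plan is to deduce this from a comparison-principle argument using CMC-$H$ barriers, following the strategy implicit in the existence proof of \cite{lm}, with an additional propagation step to push the estimate from $\partial\Omega$ into $\Omega$.

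Step 1 (barrier at the boundary). For $0<H<1$, the umbilical CMC-$H$ surfaces in $\h^3$ are equidistant surfaces from totally geodesic planes; in the half-space model they are represented by pieces of Euclidean spheres of radius $R$ centered at height $HR$. Given $p\in\partial\Omega$, strict convexity of $\Omega$ furnishes an exterior tangent circle to $\partial\Omega$ at $p$, i.e.\ a disk $D\supset\Omega$ whose boundary is tangent to $\partial\Omega$ at $p$ from outside. By tuning $R$ one obtains such an umbilical cap realized as a CMC-$H$ graph $\bar u$ over $D$ with $\bar u = a$ on $\partial D$ and $\bar u>a$ inside $D$. On $\partial\Omega$ we then have $u=a\leq\bar u$, with equality at $p$. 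Applying the comparison principle \cite[Th.~10.1]{gt} (or equivalently the geometric comparison principle of the paper) yields $u\leq\bar u$ in $\Omega$, and taking the inward normal derivative at $p$ gives $|Du(p)|\leq|D\bar u(p)|$. A direct computation of the boundary slope of the equidistant CMC-$H$ cap in the half-space model produces the universal bound $|D\bar u(p)|\leq\sqrt{1-H^2}/H$, which is the tangent of the angle at which an equidistant CMC-$H$ surface meets the horosphere $P_a$. This establishes (\ref{esh}) on $\partial\Omega$.

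Step 2 (interior propagation). To pass from $\partial\Omega$ to $\Omega$, I would apply a maximum principle to an auxiliary function. The differentiated equation (\ref{eq33}) is a linear elliptic PDE for each derivative $v^k=u_k$. Multiplying (\ref{eq33}) by $v^k$ and summing over $k$ produces a second-order elliptic inequality for $\Phi=|Du|^2$ whose principal part coincides with the uniformly elliptic operator $((1+|Du|^2)\delta_{ij}-u_iu_j)\partial_{ij}$ and whose zeroth-order contribution carries the factor $1-H\sqrt{1+|Du|^2}$. A Cauchy--Schwarz estimate on the term $[(1+|Du|^2)\delta_{ij}-u_iu_j]u_{ki}u_{kj}$ shows that $\Phi$ cannot attain a new interior maximum on the set $\{H\sqrt{1+|Du|^2}>1\}$; combined with Step 1 and Hopf's lemma this yields (\ref{esh}) throughout $\Omega$.

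Main obstacle. The delicate part is Step 1, specifically the selection of the appropriate CMC-$H$ barrier for each boundary point and the verification that its boundary slope is exactly $\sqrt{1-H^2}/H$. The family of equidistant spherical caps in the half-space model has slopes varying over $(0,\infty)$ as $R$ varies in $(a/(1+H),a/H)$, and identifying the barrier corresponding to a generic exterior tangent disk requires care with the half-space geometry of $\h^3$. The use of strict convexity (not strong convexity) enters here: it guarantees the existence of a one-parameter family of exterior tangent circles at each $p\in\partial\Omega$, from which a suitable barrier can always be selected. Once the boundary estimate is in hand, Step 2 is essentially a computation from (\ref{eq33}).
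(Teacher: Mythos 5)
Your plan is genuinely different from the paper's proof, which simply invokes \cite[Theorem 4.1]{lm}: there the inequality $H\langle p,a\rangle+\langle N'(p),a\rangle\leq 0$ is obtained by an intrinsic maximum-principle computation on the surface $\Sigma_u$ in the Minkowski model (exploiting the Jacobi-type equations satisfied by $\langle p,a\rangle$ and $\langle N',a\rangle$, whose second-fundamental-form terms combine with a favourable sign), and Lemma \ref{l2} follows by rewriting that inequality in the half-space model. Your boundary-barrier-plus-interior-propagation architecture is a reasonable alternative in spirit, but as written it has two genuine gaps.

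The decisive one is Step 2. Multiplying (\ref{eq33}) by $v^k$ and summing gives, with $a_{ij}=(1+|Du|^2)\delta_{ij}-u_iu_j$ and $\Phi=|Du|^2$,
\begin{equation*}
a_{ij}\Phi_{ij}+b_i\Phi_i=2\,a_{ij}v^k_iv^k_j+\frac{4(1+|Du|^2)}{u^2}\bigl(1-H\sqrt{1+|Du|^2}\bigr)\Phi .
\end{equation*}
At an interior maximum of $\Phi$ lying in the set $\{H\sqrt{1+|Du|^2}>1\}$ the left-hand side is $\leq 0$, the first term on the right is $\geq 0$, and the second term is $\leq 0$; there is no contradiction. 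The zeroth-order coefficient has the \emph{wrong} sign exactly on the set you need to exclude, so the maximum principle does not apply there and the Cauchy--Schwarz term only pushes the inequality further in the useless direction. This is the same sign obstruction that forces the paper to prove Lemma \ref{l2} \emph{before} it can run the Payne--Philippin machinery (note that Corollary \ref{t2} and the $\Phi$-function with $\rho=1/\sqrt{1+\xi}-H$ both presuppose (\ref{esh}) when $0<H<1$, so neither can be used here without circularity). The missing idea is that the quantity to which one applies the maximum principle must not be $|Du|^2$ but the geometric combination used in \cite{lm}. Step 1 has a related, smaller circularity: the comparison principle \cite[Th.~10.1]{gt} requires the right-hand side of (\ref{eq1}) to be non-increasing in $u$, which is equivalent to $1/\sqrt{1+|Du|^2}\geq H$ --- again the conclusion of the lemma. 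One can replace it by a sweeping/tangency argument with hyperbolic dilations (as in the proof of Theorem \ref{t-ex}), and your slope computation for the equidistant caps is correct in the limit $R\to\infty$; also note that mere strict convexity need not furnish an exterior tangent disk of finite radius at a boundary point where $\kappa=0$. But even with Step 1 repaired, the argument does not close without a correct interior propagation.
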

  
 \begin{proof} 
 Consider the Minkowski model for $\h^3$ (see notation and details in \cite{lm})).   It is proved in \cite[Theorem  4.1]{lm} that under the assumptions of Lemma \ref{l2}, 
 \begin{equation}\label{hna}
H\langle p,a\rangle+\langle N'(p),a\rangle\leq 0,\quad \mbox{ $p\in \Sigma_u$},
\end{equation}
 where $N'$ is the Gauss map of $\Sigma_u$.  We   write the inequality (\ref{hna}) in the upper half-space model of $\h^3$. The relation between both models establishes
 $$ \langle p,a\rangle=\frac{1}{u},\quad  \langle N',a\rangle=-\frac{\langle N,(0,0,1)\rangle}{u},$$ where here $N$ is the Gauss map of $\Sigma_u$ as surface in Euclidean space $\r^3_+$. Thus (\ref{hna}) becomes 
 $H-\langle N,(0,0,1)\rangle \leq 0$, that is, 
 $$H-\frac{1}{\sqrt{1+|Du|^2}}\leq 0,$$
 which yields (\ref{esh}).
 \end{proof}

As a consequence of Lemma \ref{l2},   the   Hopf Maximum Principle  for equation  (\ref{eq33}) implies that  if $v^k$ takes a non-negative maximum in $\Omega$ or a non   positive minimum in $\Omega$, then $v^k$ must be a constant function (\cite[Th. 3.5]{gt}). We point out also that   the function $u$ is    analytic by standard theory (\cite{bers,ni}), and the same holds for the functions $v^k$.

 For each  $\theta\in\r$, let $(\cos\theta,\sin\theta)$ be a vector of the unit circle ${\mathbb S}^1$. Since (\ref{eq33}) is a linear equation on $v^k$, the function  
 \begin{equation}\label{vv}
 v(\theta)=  Du\cdot (\cos\theta,\sin\theta)=v^1\cos\theta+v^2\sin\theta
 \end{equation}
 also satisfies (\ref{eq33}).  Denote $\n$ the outward unit normal vector of $\partial\Omega$. Because $u$ is constant along $\partial\Omega$, we have $(v^1,v^2)=Du= ( Du\cdot\n ) \n$ along $\partial\Omega$, that is,
 $$(v^1,v^2)=\frac{\partial u}{\partial n}\n.$$
From (\ref{vv}), 
 $$v(\theta)=\frac{\partial u}{\partial\n} \n\cdot (\cos\theta,\sin\theta)\quad \mbox{ along }\partial\Omega.$$
 On the other hand, since $u$ is constant along $\partial \Omega$, the Strong Maximum Principle of  Hopf  (\cite[Th. 3.6]{gt})  implies that any outward
directional derivative on $\partial\Omega$ is negative and thus, 
$$\frac{\partial u}{\partial \n}<0\quad \mbox{along  $\partial\Omega$}.$$
  Fix $\theta\in\r$. Since $\partial\Omega$ is strictly convex, the map $\n:\partial\Omega\rightarrow{\mathbb S}^1$ is one-to-one. It follows that  there exist exactly two points of $\partial\Omega$ where  $\n(s)$ is orthogonal to the fixed direction $(\cos\theta,\sin\theta)$.  By the definition of $v(\theta)$, the function $v(\theta)$ vanishes along $\partial\Omega$ at exactly   two points. 
   
We now follow the argument given by Philippin in  \cite{ph} to prove the uniqueness of the critical points. By completeness, we give it briefly. The proof is by contradiction, and suppose that there are at least two critical points of $u$ in $\Omega$. Let $P_1$ and $P_2$ be two critical points which are fixed in the sequel. The argument consists into the following steps.

\begin{enumerate}
\item The function $v(\theta)$ is not constant in $\Omega$ because $v(\theta)$ only has two zeroes along $\partial\Omega$.
\item As a consequence, the critical points of $v(\theta)$ are isolated point of $\Omega$ because $v(\theta)$ is analytic. 
\item Let $\mathcal{N}_\theta=v(\theta)^{-1}(\{0\})$ be the nodal set of $v_\theta$. Because $v(\theta)$ is analytic, standard theory asserts that near to a critical point of $v(\theta)$, the function $v(\theta)$ is asymptotic to a harmonic homogeneous polynomial (\cite{bers}). Following Cheng \cite{ch}, $\mathcal{N}_\theta$  is diffeomorphic to the nodal set of the homogeneous polynomial  that approximates, in particular,     $\mathcal{N}_\theta$ is formed by a set of regular analytic curves at regular points, the so-called nodal lines. On the other hand, in a neighbour of a critical point, the   nodal lines   form an equiangular system. 

We claim that  there does not exist a closed component of $\mathcal{N}_\theta$ contained in $\Omega$. This is because if   $\mathcal{N}_\theta$ encloses a subdomain $\Omega'$ of $\Omega$ where $v(\theta)=0$ along $\partial\Omega'$,  the maximum principle   implies that $v(\theta)$ is identically $0$ in $\Omega'$, a contradiction.  

\item  We prove that $\mathcal{N}_\theta$ is formed exactly by one nodal line.  Suppose by contradiction that  there are two nodal lines  $L_1$ and $L_2$. Because both $L_1$ an $L_2$  are not closed, then the arcs $L_1$ and $L_2$ end at the boundary points where $v(\theta)$ vanishes, being both points the two end-points of $L_i$. Since $\Omega$ is simply-connected, then the two arcs $L_1$ and $L_2$ enclose at least one subdomain $\Omega'\subset\Omega$ where $v(\theta)$ vanishes  along $\partial\Omega'$. This is impossible by the maximum principle.

\item As a conclusion, the nodal set $\mathcal{N}_\theta$ is formed exactly by one arc. We now give an orientation to the arc $\mathcal{N}_\theta$ for all $\theta$.  The chosen orientation in   $\mathcal{N}_\theta$   is that we first pass through $P_1$ and then through $P_2$. With respect to this orientation, we are ordering the two boundary points where $v(\theta)$ vanishes. More precisely, denote by   $P(\theta)$ the initial point of $\mathcal{N}_\theta$, which after passing $P_1$ and then $P_2$, finishes at the other boundary point, which is denoted by $Q(\theta)$. 

\item Let us consider $\theta$ varying in the interval $[0,\pi]$.  We observe that by the definition of $v(\theta)$ in (\ref{vv}),   the functions $v(0)$ and $v(\pi)$ coincides up to the sign, that is, $v(0)=-v(\pi)$ and thus the nodal lines $\mathcal{N}_0$ and $\mathcal{N}_\pi$ coincide as sets of points. However,   when    $\theta$ runs in $[0,\pi]$, the ends points of $\mathcal{N}_0$ interchange its position when $\theta$ arrives to the value $\theta=\pi$, that is, in the nodal line $\mathcal{N}_\pi$. Consequently, and according to the chosen orientation in $\mathcal{N}_\theta$,  $P(0)=Q(\pi)$ and $P(\pi)=Q(0)$. Because  all the arcs $\mathcal{N}_\theta$ pass first $P_1$ and then $P_2$, this interchange of the end points between $\mathcal{N}_0$ and $\mathcal{N}_\pi$ implies the existence of another nodal line for  $v(\pi)$.  This is impossible by the item 4: this contradiction completes the proof of Theorem \ref{t1}.

\end{enumerate}

We extend  Theorem \ref{t1} in the limit case $u=0$ along $\partial\Omega$.

\begin{corollary}   Let $\Omega\subset\r^2$ be a bounded strictly convex domain.  Let $H$ be a real number with $H<1$. If $u$ is a solution  (\ref{eq1}) and $u=0$ along $\partial\Omega$, then $u$ has a unique critical point.
\end{corollary}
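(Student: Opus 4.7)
My strategy is to reduce to Theorem \ref{t1} by exhausting $\Omega$ from the inside with level-set subdomains. For $\epsilon>0$ set $\Omega_\epsilon:=\{x\in\Omega:u(x)>\epsilon\}$. The $1/u$ factor in (\ref{eq1}) forces $u>0$ throughout $\Omega$, so the sets $\Omega_\epsilon$ exhaust $\Omega$ as $\epsilon\to 0^+$; on each $\Omega_\epsilon$ the function $u$ is a solution of (\ref{eq1}) with constant boundary value $u\equiv\epsilon>0$, and the critical points of $u$ lying in $\Omega_\epsilon$ agree with those of $u|_{\Omega_\epsilon}$. Furthermore, every critical point of $u$ on $\Omega$ belongs to such a sublevel set: since $\partial\Omega$ sits on the ideal boundary $\{x_3=0\}$ of $\h^3$, the graph $\Sigma_u$ meets its boundary with vertical tangent planes in the half-space model, so $|Du(x)|\to\infty$ as $x\to\partial\Omega$. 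Hence $K:=\{x\in\Omega:|Du(x)|\leq 1\}$ is a compact subset of $\Omega$ containing all critical points of $u$, and $\epsilon_0:=\min_K u>0$ places every critical point inside $\Omega_\epsilon$ as soon as $\epsilon<\epsilon_0$.

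The main obstacle is to establish that $\Omega_\epsilon$ is a bounded strictly convex domain for all sufficiently small $\epsilon>0$. The level curve $\partial\Omega_\epsilon=\{u=\epsilon\}$ is a smooth deformation of $\partial\Omega$, but strict convexity requires $C^2$-closeness, which is delicate because $|Du|$ blows up at $\partial\Omega$. The plan is to invoke the boundary regularity of the CMC graph $\Sigma_u$: near any $p\in\partial\Omega$ one parametrizes $\Sigma_u$ smoothly in variables $(\xi,\tau)$, where $\xi$ is an arclength parameter along $\partial\Omega$ and $\tau=x_3$, so that $\tau\mapsto(x_1(\xi,\tau),x_2(\xi,\tau))$ is a smooth family of plane curves that reduces to $\partial\Omega$ at $\tau=0$. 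The singularity of $u$ at $\partial\Omega$ is absorbed into this parametrization, as in the standard boundary expansion for such asymptotic Dirichlet problems, yielding $C^2$-convergence $\partial\Omega_\epsilon\to\partial\Omega$ as $\epsilon\to 0^+$. Since $\partial\Omega$ has strictly positive curvature, the same holds for $\partial\Omega_\epsilon$ for all sufficiently small $\epsilon$.

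Combining the two ingredients, choose any $\epsilon\in(0,\epsilon_0)$ small enough that $\Omega_\epsilon$ is bounded and strictly convex, and apply Theorem \ref{t1} to $u|_{\Omega_\epsilon}$ with boundary value $\epsilon$. This yields exactly one critical point of $u$ in $\Omega_\epsilon$, and since every critical point of $u$ in $\Omega$ already lies in $\Omega_\epsilon$, this is the unique critical point of $u$.
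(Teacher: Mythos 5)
Your proposal follows essentially the same route as the paper: exhaust $\Omega$ by the superlevel sets $\Omega_\epsilon=\{u>\epsilon\}$, note that $u|_{\Omega_\epsilon}$ solves (\ref{eq1})--(\ref{eq2}) with boundary value $\epsilon$, and invoke Theorem \ref{t1}. Two remarks. First, your auxiliary claim that $|Du(x)|\to\infty$ as $x\to\partial\Omega$ is false for $H\neq 0$: a surface of constant mean curvature $H$ with $|H|<1$ meets the ideal boundary at a constant contact angle determined by $H$, and it is orthogonal to it only in the minimal case $H=0$; near $\partial\Omega$ one expects $|Du|\to\sqrt{1-H^2}/|H|$, and indeed Lemma \ref{l2} shows $|Du|$ is globally bounded for $0<H<1$, so for $H$ close to $1$ your set $K=\{|Du|\le 1\}$ is all of $\Omega$ and $\epsilon_0=\min_K u$ need not be positive. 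Fortunately this compactness step is dispensable: a given critical point $x_0$ lies in $\Omega_\epsilon$ as soon as $\epsilon<u(x_0)$, so if there were two critical points one takes $\epsilon$ below both values of $u$ and below the convexity threshold and contradicts Theorem \ref{t1} directly; this is exactly how the paper argues, observing that the unique critical point of $u|_{\Omega_a}$ is the same for all small $a$ because $\Omega_{a'}\subset\Omega_a$ for $a<a'$. Second, you correctly identify the strict convexity of $\Omega_\epsilon$ for small $\epsilon$ as the main obstacle; your sketch via boundary regularity of the graph is plausible but not carried out, and the paper offers no more detail here --- it simply asserts that $\Omega_a$ is strictly convex for $a$ sufficiently close to $0$. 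So, modulo that unproved convexity step (shared with the paper), your argument is correct once the erroneous gradient blow-up claim is replaced by the trivial observation above.
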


\begin{proof} 
We consider positive values $a$ sufficiently close to $0$ so the set $\Omega_a=\{x\in\Omega: u(x)> a\}$ is strictly convex. Then Theorem  \ref{t1} asserts the existence of a unique critical point, which must coincide for all $a$ because $\Omega_{a'}\subset\Omega_a$ if $a<a'$. The argument finishes by letting $a\rightarrow 0$. 
\end{proof}

%%%%%%%%%%%%%
\section{Proof of theorems \ref{t-du} and \ref{t-ex}}\label{sec3}
%%%%%%%%%%%%%%%%%%%%%%%%%%

In this section we apply the theory of the maximum principle developed by  Payne and Philippin in \cite{pp} for some $\Phi$-functions associated to equation (\ref{eq1}).  We introduce the notation employed there.  Consider an equation of type
\begin{equation}\label{ep}
\mbox{div}(g(q^2)Du)+\rho(q^2)f(u)=0, 
\end{equation}
where $\rho, g>0$, $g$ is a $C^2$ function of its argument and $\rho$ and $f$ are $C^1$ functions. Here $q=|Du|$. We also assume that (\ref{ep}) satisfies the elliptic condition $g(\xi)+2\xi g'(\xi)>0$ for all $\xi>0$. We define the $\Phi$-function 
$$\Phi(x;\alpha)=\int_{c_1}^{q^2}\frac{g(\xi)+2\xi g'(\xi)}{\rho(\xi)}\, d\xi+\alpha\int_{c_2}^u f(\eta)\,d\eta,$$
where $\alpha$ is a  real parameter and $c_1,c_2\in\r$. Here   the functions $g$ and $\rho$ are evaluated in $q^2$.   

We now  prove Theorem \ref{t-du}.

\begin{proof}[Proof of Theorem \ref{t-du}]

 For equation  (\ref{eq1}), we take  $c_1=0$, $c_2=1$ and the functions $g$, $\rho$ and $f$ are
\begin{equation}\label{cho}
g(\xi)=\frac{1}{\sqrt{1+\xi}},\ \rho(\xi)=\frac{1}{\sqrt{1+\xi}}-H,\ f(u)=\frac{2}{u}\cdot
\end{equation}
Following the theory of Payne and Philippin, we require that $\rho$ is positive, which it is clear if   $H\leq 0$.  On the other hand, in the range $0<H<1$, the evaluation of $\rho$ at $q^2$ is non-negative by Lemma \ref{l2}. A straight-forward computation of $\Phi(x;\alpha)$  gives  
$$\Phi(x;\alpha)=\log\left(\frac{(1+q^2)}{(1-H\sqrt{1+q^2})^2} u^{2\alpha}\right),\quad x\in\Omega.$$

 When $\Omega$ is strictly convex, it is proved in \cite[Corollary    1]{pp} that $\Phi(x;2)$ attains its maximum at one critical point of $u$. By Theorem  \ref{t1}, we know that the function $u$ has exactly one critical point, which we denote by $\o$, and let $u_M=u(\o)$, which coincides with the maximum value of $u$ in $\Omega$. Then we find 
$$\frac{1+|Du|^2}{(1-H\sqrt{1+|Du|^2})^2}u^4 \leq \frac{1}{(1-H)^2}u_M^4,$$
that is,
\begin{equation}\label{du1}
\frac{1+|Du|^2}{(1-H\sqrt{1+|Du|^2})^2} \leq  \frac{1}{(1-H)^2}\left(\frac{u_M}{u}\right)^{4}\leq \frac{1}{(1-H)^2}\left(\frac{u_M}{a}\right)^{4}.
\end{equation}
Recall the value $C=u_M^2/((1-H)a^2)$.  It follows from (\ref{du1}) that
$$(1+HC)\sqrt{1+|Du|^2}\leq C.$$
The inequality (\ref{duu}) is shown provided $1+HC>0$. This inequality is immediate if $0\leq H<1$. In case $H<0$,  the inequality $1+HC>0$  is equivalent to (\ref{um}).
\end{proof}

 We follow by  focusing in    Theorem 4 of \cite{pp}.   The  inequality (2.39) in \cite{pp} can be written for our functions defined in (\ref{cho})  as 
\begin{equation}\label{ff1}
\left(\delta_{ij}-\frac{u_iu_j}{1+|Du|^2}\right)\Phi_{ij}+{W}_i\Phi_i\geq \frac{2 (\alpha -1) \left(2 H \sqrt{1+q^2}+(\alpha -2) q^2-2\right)}{u^2 (1+q^2)},
\end{equation}
where ${W}_i$ is a vector function uniformly bounded in $\Omega$. In order to apply the First Hopf Maximum Principle, we require that the right-hand side in (\ref{ff1}) is non-negative. If $\alpha\in [0,1]$, it suffices that the expression in the second parentheses in (\ref{ff1}) is non-positive. This is clear if $H\leq 0$ independently if $\Omega$ is or is not convex. If $0<H<1$ and $\Omega$ is convex,  we deduce  from (\ref{esh}) that
$$2 H \sqrt{1+q^2}+(\alpha -2)q^2-2\leq (\alpha-2)q^2\leq 0.$$ 
Following \cite{pp}, we deduce that $\Phi(x;\alpha)$ attains its maximum at some  some boundary point for all $\alpha\in [0,1]$. 

In the particular case $\alpha=0$, we deduce the following result.

\begin{corollary}\label{t2} Let $\Omega\subset\r^2$ be a bounded  domain and let $H\leq 0$. If $u$ is a solution of (\ref{eq1}),  
$$\max_{\overline{\Omega}}|Du|=\max_{\partial\Omega}|Du|.$$
The same holds when $0<H<1$ if, in addition,  $\Omega$ is  strictly convex, and $u=a>0$ on $\partial\Omega$.
\end{corollary}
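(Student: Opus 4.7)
The plan is to specialize the preceding discussion of the Payne--Philippin $\Phi$-function to the value $\alpha=0$ and then translate a boundary-maximum statement for $\Phi$ into one for $|Du|$ via monotonicity.

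First, I would take $\alpha=0$ in the explicit formula for $\Phi(x;\alpha)$ computed in the proof of Theorem~\ref{t-du}, obtaining
\[
\Phi(x;0)=\log\!\left(\frac{1+|Du|^2}{(1-H\sqrt{1+|Du|^2})^2}\right).
\]
The hypothesis $H\le 0$ makes the denominator strictly positive on $\Omega$ with no further assumption, while for $0<H<1$ the strict convexity of $\Omega$ allows Lemma~\ref{l2} to guarantee $H\sqrt{1+|Du|^2}\le 1$, so $\Phi(x;0)$ is well defined in both cases.

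Next I would invoke the inequality (\ref{ff1}) with $\alpha=0$. The parenthetical factor $2H\sqrt{1+q^2}+(\alpha-2)q^2-2$ becomes $2H\sqrt{1+q^2}-2q^2-2$. If $H\le0$, each term is non-positive, so the right-hand side of (\ref{ff1}) is non-negative with no convexity assumption on $\Omega$. If $0<H<1$ and $\Omega$ is strictly convex, Lemma~\ref{l2} gives $H\sqrt{1+q^2}\le1$, so $2(H\sqrt{1+q^2}-1)-2q^2\le 0$ and again the right-hand side is non-negative. In both cases, the First Hopf Maximum Principle applied to the elliptic differential inequality (\ref{ff1}) forces $\Phi(\cdot;0)$ to attain its maximum on $\partial\Omega$.

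Finally, I would pass from $\Phi(x;0)$ to $|Du|$ by noting that $\Phi(x;0)$ is a strictly increasing function of $|Du|^2$ on the admissible range. Writing $t=\sqrt{1+|Du|^2}\ge 1$ and differentiating,
\[
\frac{d}{dt}\Bigl(2\log t-2\log(1-Ht)\Bigr)=\frac{2}{t}+\frac{2H}{1-Ht}=\frac{2}{t(1-Ht)}>0,
\]
since $1-Ht>0$ in every case under consideration. Consequently $|Du|$ attains its maximum at the same point where $\Phi(\cdot;0)$ does, namely on $\partial\Omega$, which yields $\max_{\overline\Omega}|Du|=\max_{\partial\Omega}|Du|$. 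The one point requiring genuine care is the sign verification of the right-hand side of (\ref{ff1}) in the range $0<H<1$, which is where the strict convexity hypothesis enters (through Lemma~\ref{l2}); everything else is a routine specialization of results already established in the section.
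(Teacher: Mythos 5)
Your proposal is correct and follows essentially the same route as the paper: specialize the Payne--Philippin inequality (\ref{ff1}) to $\alpha=0$, check the sign of the right-hand side (trivially for $H\leq 0$, via Lemma \ref{l2} for $0<H<1$ on a strictly convex domain), apply the Hopf maximum principle to place the maximum of $\Phi(\cdot;0)$ on $\partial\Omega$, and then use monotonicity of $\Phi(x;0)$ in $|Du|$ to transfer the conclusion to the gradient. The only difference is cosmetic: you make the final monotonicity step explicit, whereas the paper states the resulting inequality at a boundary point and asserts that $|Du|\leq q_M$ "follows directly."
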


 \begin{proof}  If we take $\alpha=0$ in (\ref{ff1}), then there exists a boundary point $P\in\partial\Omega$ such that 
 $$\frac{1+|Du|^2}{(1-H\sqrt{1+|Du|^2})^2}\leq \frac{1+q_M^2}{(1-H\sqrt{1+q_M^2})^2},$$
 where $q_M=|Du|(P)$. It follows directly that $|Du|\leq q_M$, proving the result. 
\end{proof}

From Theorem \ref{t-du} and Corollary  \ref{t2}, we prove the existence result of Theorem \ref{t-ex}.

 \begin{proof}[Proof of Theorem \ref{t-ex}]
  
  The uniqueness of solutions is a consequence that the right-hand side of (\ref{eq1}) is non-decreasing on $u$ by Lemma \ref{l2} (\cite[Th. 10.2]{gt}).
  
 For the existence of   a solution $u$ of (\ref{eq1})-(\ref{eq2}), we apply a modified version of the continuity method to the family of Dirichlet problems parametrized by $\tau\in [0,1]$  
 $$\mathcal{P}_\tau: \left\{\begin{array}{cll}
Q_\tau[u]&=&0 \mbox{ in $\Omega$}\\
 u&=&a \mbox{ on $\partial\Omega,$}
 \end{array}\right.$$
 where 
 $$Q_\tau[u]= \mbox{div}\left(\dfrac{Du}{\sqrt{1+|Du|^2}}\right)+\frac{2}{u}\left(\frac{1}{\sqrt{1+|Du|^2}}- \tau H\right).$$
 See \cite[Th. 11.4]{gt}. The graph $\Sigma_{u_\tau}$ of a solution of $u_\tau$ of $\mathcal{P}_\tau$ is a graph on $P_a$ with constant mean curvature $ \tau H$ and boundary $\partial\Omega$. Let us observe that for the value $\tau=0$, there is a solution of   $\mathcal{P}_0$ because $\partial\Omega$ is convex (\cite{lm,ns}). As usual, let $\mathcal{A}$ be the subset of $[0,1]$ consisting of all $\tau$ for which  the Dirichlet problem  $\mathcal{P}_\tau$ has a $C^{2,\alpha}$ solution. The proof consists in  showing that $1\in \mathcal{A}$ because standard regularity PDE results guarantee that any solution of $Q_\tau[u]=0$ is smooth in $\Omega$.

First observe that $\mathcal{A}\not=\emptyset$ because $0\in\mathcal{A}$. On the other hand, the set $\mathcal{A}$ is open in $[0,1]$ because 
$$\frac{\partial Q_\tau[u]}{\partial u}=-\frac{2}{u^2}\left(\frac{1}{\sqrt{1+|Du|^2}}- \tau H\right)\leq 0,$$
since $H<0$.

Finally, the main difficulty lies in proving that $\mathcal{A}$ is closed. This follows if   we are able to derive a priori $C^0$ and $C^1$ estimate of $u_\tau$ for every $\tau\in [0,1]$ and depending only on the initial data. In other words, we have to find a constant $M$, depending only on $H$, $a$ and $\Omega$, such that if $u_\tau$ is a solution of $\mathcal{P}_\tau$, then 
\begin{equation}\label{sup}
\sup_\Omega |u_\tau|+\sup_\Omega|Du_\tau|\leq M.
\end{equation}
See \cite[Th. 13.8]{gt}.  However, by using Theorem \ref{t-du}, it is enough if we find an upper bound for $\sup_\Omega |u_\tau|$. We  now use a geometric viewpoint of the solutions of $\mathcal{P}_\tau$. 

Fix $H\in\r$. After a  dilation from the origin of $\r^3_+$, which is an isometry of $\h^3$, we suppose $a=1$. Then the diameter of  $\partial\Omega$ coincides with the Euclidean one.  Let $C_R\subset P_1$ be  the circumscribed circle of $\partial\Omega$, which has a radius equal to $R$. After a horizontal translation in $\r^3_+$,  if necessary, we suppose that the centre of $C_R$ is $(0,0,1)$ and denote $D_R\subset P_1$ the disc bounded by $C_R$, which contains $\Omega$ in its interior.   We know that $\Sigma_u$ lies above the plane $x_3=1$. On the disc $D_R$, we are going to place an umbilical surface $\Sigma_w$ with the same mean curvature $H$ and being a graph on $D_R$. Indeed, and from the Euclidean viewpoint, $\Sigma_w$ is a spherical cap which is a graph of a function $w$ on the disc $D_R$. Then we prove that $\Sigma_u$ lies in the interior of the domain determined by $\Sigma_w$ and the plane $P_1$, or in other words, $u<w$ in $\Omega$. This will be proved by doing dilations $p\rightarrow tp$, $p\in\r^3_+$ from the origin $O$ of $\r^3$. After that, we have $u_M<w_M$, where $u_M$ and $w_M$ are the global maximum of $u$ and $w$ respectively. But now, we notice that $w_M$ depends only on the initial data, that is, from $\Omega$, $a$ and $H$.

The first step is to show the existence of the surface $\Sigma_w$. Consider (part of) the   Euclidean sphere in $\r^3_+$ of radius $m>0$ given by 
$$w(r)=c_0+\sqrt{m^2-r^2},\quad 0\leq r\leq R,$$
where
\begin{equation}\label{eqmh}
c_0=-mH,\quad m^2=(1-c_0)^2+R^2,
\end{equation}
 $0<c_0<1$ and $w(R)=1$. The mean curvature of $\Sigma_w$ is $H$ with respect to the upwards orientation. If we see $c_0$ as a parameter varying from $0$ to $1$, the value of the mean curvature of $\Sigma_w$  goes from $0$ to $-1/R$. It is not difficult to see that the right-hand side of (\ref{hhh}) is less than $1/H^2$. Thus $R^2<1/H^2$, that is, $-1/R<H$. Definitively, given $H$ under the hypothesis of Theorem \ref{t-ex}, we have assured the existence of $\Sigma_w$. 

We now do the argument of comparison between $\Sigma_u$ and $\Sigma_w$ by dilations. By dilations of $\Sigma_w$ with respect to the origin $O$ of $\r^3_+$, namely, $t\Sigma_w$ and $t>1$, we take $t$ sufficiently big so $t\Sigma_w$ does not meet $\Sigma_u$. Then let $t\searrow 1$ until the first touching point between $t\Sigma_w$ with $\Sigma_u$. Because an interior touching point is not possible   because both surfaces have the same (constant) mean curvature, then the first touching point occurs at $t=1$, that is, when $\Sigma_w$ comes back to its initial position and  $\Sigma_w$ touches  $\Sigma_u$ only at some boundary point of $\Omega$. In particular, $\Sigma_u$ is contained inside the domain determined by $\Sigma_w$ and the plane $x_3=1$. Therefore, we deduce that the global maximum $u_M$ is less than the highest point of $\Sigma_w$, namely, $w_M=c_0+m=m(1-H)$ and 
$$u_M<m(1-H).$$
The above argument has been done for the value $H$, but it holds for $\tau H$, $\tau\in [0,1]$. Indeed, we replace $H$ by $\tau H$. We now prove the $C^0$ estimates for the problems $\mathcal{P}_\tau$. Fix $-1\leq H<0$ and let $u_\tau$ the solution of $\mathcal{P}_\tau$, $\tau\in [0,1]$.  Let us observe that the mean curvature of $\Sigma_{u_\tau}$ is $\tau H$ and $\tau H>H$ for $\tau\in [0,1)$. Then the same process of dilations  together the comparison  principle proves   that for each $\tau \in [0,1]$, we find
\begin{equation}\label{m1}
u_\tau< w_M=m (1-H).
\end{equation}
 In order to use Theorem \ref{t-du}, and because $H<0$ and $a=1$, it suffices to prove 
\begin{equation}\label{m1h}
m(1-H)<\sqrt{\frac{H-1}{H}},
\end{equation}
that is, 
\begin{equation}\label{inedsi}
m<\frac{1}{\sqrt{H^2-H}}\cdot
\end{equation}
However, from  (\ref{eqmh}), we deduce $m^2=(1+mH)^2+R^2$, which leads to 
$$m=\frac{H+\sqrt{H^2+(1-H^2)R^2}}{1-H^2}\cdot$$
By using  (\ref{hhh}), we conclude the   desired inequality (\ref{inedsi}). Once we have obtained  (\ref{m1h}),   Theorem \ref{t-du} applies deducing   an a priori estimate for $|Du|$. Hence, and together (\ref{m1}), we have proved the existence of $M$ in (\ref{sup}). This completes the proof of Theorem \ref{t-ex}.
  
     \end{proof}
 
  \begin{remark} We compare this result with Theorem 1.1 in \cite{lm}. In \cite{lm}, the hypothesis  requires that $\Omega$ is strongly convex   in terms of the boundary data $H$, namely, $\kappa>|H|$. However in Theorem \ref{t-ex} we need that the domain $\Omega$ is strictly convex but it may contain regions where the curvature $\kappa$ of $\partial\Omega$ is close to $0$. In contrast, the size of the domain is small in relation to the value of  $H$. 
 \end{remark} 
    
%%%%%%%%%%%%%%%%%%
\section{A lower estimate of the critical point }\label{sec4}
%%%%%%%%%%%%%%%%%%%%%%%%%%%%

In this section, for $H<1$, we prove  an estimate from below of the global maximum of a solution of (\ref{eq1})-(\ref{eq2}).

\begin{theorem}\label{t3} Let $\Omega\subset\r^2$ be a  bounded strictly convex domain   with curvature $\kappa>0$. If  $H<1$ and $u$ is a solution of (\ref{eq1})-(\ref{eq2}),   then
\begin{equation}\label{um2}
u_M\geq \frac{1-H}{\kappa_0}, 
\end{equation}
where $\kappa_0=\max_{\partial\Omega}\kappa$.
\end{theorem}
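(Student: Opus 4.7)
The strategy is to compare $u$ from below with a spherical cap $w$ of the same mean curvature $H$, realized as a graph over a disc $D\subset\overline{\Omega}$ of radius $\rho=1/\kappa_0$. Once this comparison is established, the theorem reduces to the elementary observation that a Euclidean sphere has radius at least that of any of its planar cross-sections.

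First I would locate the disc. Pick a point $p_0\in\partial\Omega$ realizing the maximum curvature $\kappa(p_0)=\kappa_0$, and let $D$ be the osculating disc of radius $\rho=1/\kappa_0$ at $p_0$, tangent to $\partial\Omega$ there from the inside. Strict convexity together with $\kappa\le\kappa_0$ on $\partial\Omega$ forces $D\subset\overline{\Omega}$ by the Blaschke rolling argument.

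Next I would build the barrier cap as the radial graph $w(r)=c_0+\sqrt{m^2-r^2}$ over $D$, where $r$ denotes distance to the centre of $D$ and $c_0=-mH$; the sphere radius $m>0$ is pinned down by the boundary condition $w|_{\partial D}=a$, which translates to $(a+mH)^2=m^2-\rho^2$. The same short computation used for $\Sigma_w$ in the proof of Theorem \ref{t-ex} shows that this graph has constant hyperbolic mean curvature $H$ with the upward orientation. The comparison then proceeds as follows: on $\partial D\subset\overline{\Omega}$, Lemma \ref{l1} gives $u\ge a=w$; both $u$ and $w$ satisfy (\ref{eq1}) with the same $H$; the right-hand side of (\ref{eq1}) is non-increasing in $u$ because $1/\sqrt{1+|Du|^2}-H\ge 0$ (trivial if $H\le 0$, and a consequence of Lemma \ref{l2} if $0<H<1$). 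The standard comparison principle for quasilinear elliptic operators (Theorem 10.1 of Gilbarg--Trudinger) then yields $u\ge w$ throughout $D$. Since the maximum of $w$ is attained at the centre of $D$ with value $w_M=c_0+m=m(1-H)$, and since $m\ge\rho$ (the sphere of radius $m$ cuts the horosphere $P_a$ in the circle of radius $\rho$), we obtain
\[
u_M \;\ge\; \max_D w \;=\; m(1-H) \;\ge\; \rho(1-H) \;=\; \frac{1-H}{\kappa_0}.
\]

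The main obstacle I anticipate is guaranteeing the existence of $\Sigma_w$ as a bona fide graph for the whole range $H<1$. When $H\ge 0$ this is automatic, but for $H<0$ one also needs $a+mH\ge 0$, i.e.\ a smallness condition on $\rho$ relative to $a/|H|$. If that condition is not met one would have to restrict to a smaller admissible radius or replace the spherical cap by a different CMC-$H$ barrier, and this is the step I would expect to require the most care.
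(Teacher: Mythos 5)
Your argument is genuinely different from the paper's: the paper deduces (\ref{um2}) from the minimum principle for $\Phi(x;1)$ (Proposition \ref{t-min}), evaluating $\Phi$ at its boundary minimum point $Q$ and combining the inequality $\partial\Phi/\partial\n\leq 0$ with equation (\ref{eq1}) written in normal coordinates along $\partial\Omega$ (plus a separate radial case), whereas you compare $u$ from below with an umbilical cap over the osculating disc at the point of maximal curvature. Your route is sound in the range $0\leq H<1$: Blaschke's rolling theorem does give $D\subset\overline{\Omega}$; the cap $w(r)=-mH+\sqrt{m^2-r^2}$ with $(a+mH)^2=m^2-\rho^2$ exists because the admissible root $m=\bigl(aH+\sqrt{a^2+(1-H^2)\rho^2}\bigr)/(1-H^2)$ is positive and $c_0=-mH\leq 0<a$ makes $a+mH>0$ automatic; the comparison $u\geq w$ on $D$ goes through (one checks $1-H\sqrt{1+|Dw|^2}=w/(w+mH)>0$ along $\Sigma_w$, while Lemma \ref{l2} controls $u$); and $m\geq\rho$ then yields $u_M\geq m(1-H)\geq(1-H)/\kappa_0$.

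For $H<0$, however, the obstacle you flag at the end is a genuine gap rather than a technicality, because Theorem \ref{t3} is asserted for every $H<1$ with no relation imposed between $\kappa_0$, $a$ and $H$. The requirement $a+mH\geq 0$ for the cap to be an upper-hemisphere graph works out to $\rho\leq -a/H$, i.e. $\kappa_0\geq |H|/a$ (with a further discriminant restriction when $H\leq -1$), and this can fail under the stated hypotheses. Neither of your proposed repairs closes the gap: shrinking the disc to radius $\rho'<\rho$ only yields $u_M\geq(1-H)\rho'$, strictly weaker than (\ref{um2}); replacing $H$ by some $H'\in(H,0]$ for which a cap exists gives only $(1-H')/\kappa_0$; and no substitute exact barrier is available, since a CMC-$H$ graph over the full disc $D$ with constant boundary value $a$ would be rotational and regular at the axis, which forces it to be precisely the umbilical cap whose existence just failed. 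So the proposal proves the theorem for $0\leq H<1$, and for $H<0$ only under the extra hypothesis $\kappa_0\geq|H|/a$; the paper's $\Phi$-function argument is what covers the remaining range uniformly.
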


 Firstly, we need to prove a minimum principle for the function $\Phi(x;1)$. The next result is inspired by other similar in the torsional creep problem (\cite{ph}).

\begin{proposition}\label{t-min}
 Let $\Omega\subset\r^2$ be a bounded strictly convex domain. Let $H$ be a real number with $H<1$. If $u$ is a non-radial solution of (\ref{eq1})-(\ref{eq2}), then the function $\Phi(x;\alpha)$ attains its minimum value on $\partial\Omega$ for any  $\alpha\in [1,2]$.
\end{proposition}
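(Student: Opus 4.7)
The plan is to invoke a minimum-principle companion to the argument used after (\ref{ff1}) and Corollary~1 of \cite{pp}, based on the same differential identity underlying (\ref{ff1}) but exploiting the complementary sign range $\alpha\in[1,2]$.

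First I would inspect the sign of the bracket appearing on the right-hand side of (\ref{ff1}), namely
$$2H\sqrt{1+q^2}+(\alpha-2)q^2-2.$$
For $\alpha\in[1,2]$ the term $(\alpha-2)q^2$ is non-positive. When $H\leq 0$, the full expression is bounded above by $-2$; when $0<H<1$, Lemma~\ref{l2} yields $\sqrt{1+q^2}\leq 1/H$, hence $2H\sqrt{1+q^2}\leq 2$, and the bracket is again non-positive. Multiplying by the non-negative factor $2(\alpha-1)/(u^2(1+q^2))$, the whole right-hand side of (\ref{ff1}) is non-positive throughout $\Omega$.

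Second, this is precisely the sign condition needed for the minimum-principle branch of Payne--Philippin's theory (Theorem~4 of \cite{pp} together with its minimum-principle analog), applied to the uniformly elliptic operator $L:=(\delta_{ij}-u_iu_j/(1+|Du|^2))\partial_{ij}+W_i\partial_i$, whose coefficient matrix has eigenvalues $1$ and $1/(1+|Du|^2)$ and whose drift $W_i$ is bounded in $\Omega$. The Hopf strong minimum principle then delivers the dichotomy: either $\Phi(x;\alpha)$ attains its minimum on $\partial\Omega$, or $\Phi(x;\alpha)$ is identically constant in $\Omega$.

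The main obstacle is to rule out the constant alternative when $u$ is non-radial. If $\Phi(x;\alpha)\equiv c_0$, the explicit expression for $\Phi$ gives
$$(1+|Du|^2)\,u^{2\alpha}=e^{c_0}\bigl(1-H\sqrt{1+|Du|^2}\bigr)^{2},$$
so $|Du|$ is a smooth function of $u$ alone. Hence the level sets $\{u=t\}$ coincide with level sets of $|Du|$; combined with the uniqueness of the critical point $\mathbf{O}$ provided by Theorem~\ref{t1} and substitution of the relation $q=q(u)$ back into (\ref{eq1}), a direct ODE argument (in the spirit of Philippin~\cite{ph}) forces the level sets of $u$ to be concentric circles about $\mathbf{O}$, so that $u$ is radial about $\mathbf{O}$. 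This contradicts the non-radial hypothesis and finishes the proof.
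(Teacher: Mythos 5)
There are two genuine gaps here, and they are structural rather than cosmetic. First, you run your sign analysis on the right-hand side of (\ref{ff1}), which is an \emph{inequality} of the form $L\Phi\geq \mathrm{RHS}$ with a bounded drift $W_i$. Showing that this lower bound is non-positive gives no information whatsoever about the sign of $L\Phi$, so it cannot feed a minimum principle; the sign condition you need is $L\Phi\leq 0$. The paper instead works with the exact identity (2.15) of \cite{pp}, reproduced as (\ref{pp2}), whose right-hand side carries the factor $(\alpha-2)(\alpha-1)$ together with the bracket $2(1-H\sqrt{1+q^2})+q^2=(\sqrt{1+q^2}-H)^2+1-H^2>0$; only because this is an equality does non-positivity of the right-hand side for $\alpha\in[1,2]$ yield $L\Phi\leq 0$. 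Your bracket $2H\sqrt{1+q^2}+(\alpha-2)q^2-2$ is not even the expression that appears in the relevant identity.

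Second, and more seriously, the drift $\tilde{W}_i$ in the correct identity (\ref{pp2}) is \emph{singular at the critical point of $u$}, so the Hopf principle only gives the trichotomy: minimum on $\partial\Omega$, or $\Phi$ constant, or minimum at the unique critical point $\mathbf{O}$. Your assumption of a uniformly bounded drift erases the third alternative, which is precisely where the real work of the paper lies: one must show $u_{11}(\mathbf{O}),u_{22}(\mathbf{O})<0$ via a nodal-line argument, compute $\Phi_{ij}(\mathbf{O};\alpha)$ explicitly, compare $\Delta u(\mathbf{O})$ with equation (\ref{eq1}) to force $\alpha\leq 1$, and then recover the endpoint $\alpha=1$ by a continuity argument. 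None of this appears in your proposal. As a lesser point, your treatment of the constant alternative (deducing radial symmetry directly from $|Du|=q(u)$ by an unspecified ODE argument) is sketchier than the paper's route, which observes that $\Phi\equiv\mathrm{const}$ forces $\alpha\in\{1,2\}$ via the strictly positive bracket, hence constant Neumann data on $\partial\Omega$, and then invokes Serrin's symmetry theorem; but this part is at least salvageable, whereas the first two gaps invalidate the argument as written.
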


\begin{proof}  Following \cite[inequality (2.15)]{pp},    it was proved that if $u$ is a solution of (\ref{eq1})-(\ref{eq2}), then $\Phi(x;\alpha)$ satisfies the next elliptic differential equation
  \begin{equation}\label{pp2}
\left(\delta_{ij}-\frac{u_iu_j}{1+|Du|^2}\right)\Phi_{ij}+\tilde{W}_i\Phi_i=\frac{2 (\alpha -2) (\alpha -1) \left(2(1- H \sqrt{q^2+1})+q^2\right)}{\left(q^2+1\right) u^2},
\end{equation}
where $\tilde{W}_i$ is a vector function which is singular at the critical point of $u$. It is not difficult to see that if  $\alpha\in [1,2]$, the  right-hand side of (\ref{pp2}) is non-positive because $(\alpha-2)(\alpha-1)\leq 0 $  and the expression in parentheses $2(1-H\sqrt{1+q^2})+q^2$ is always non-negative: this is immediate for $H\leq 0$ and if $0<H<1$, we use Lemma \ref{l2}. 

By the Hopf Maximum Principle, and since the vector functions $\tilde{W}_i$ are singular at the critical points of $u$, we conclude that $\Phi(x;\alpha)$ attains its minimum at the unique critical point of $u$ or at a boundary point. Recall that by  Theorem  \ref{t1}, the function $u$ has exactly one critical point $\o$. 

The proof of Proposition \ref{t-min} finishes if we discard the case that the minimum occurs at some critical point. The proof follows now the next steps. 
\begin{enumerate}
\item The function $\Phi(x;\alpha)$ is not constant in $\Omega$. The proof is by contradiction. If $\Phi$ is constant, then the left-hand side of (\ref{pp2}) is $0$. If we see the right-hand side of (\ref{pp2}), the only possibility to be $0$ is that   $\alpha$ is $1$ or $2$. We prove that this is not possible. We consider the case   $\alpha=1$ because the argument for $\alpha=2$ is similar.    By the expression of $\Phi(x;1)$, we find that 
$$\frac{1+|Du|^2}{(1-H\sqrt{1+|Du|^2})^2}u^{2}$$
is constant, in particular, $|Du|$ is constant along $\partial\Omega$. Since $u=a$ along $\partial\Omega$, then $\partial u/\partial\n$ is constant along $\partial\Omega$. Then $u$ is a solution of the Dirichlet problem (\ref{eq1})-(\ref{eq2}) together the Neumann condition $\partial u/\partial\n=\mbox{ct}$ along $\partial\Omega$.  A result of Serrin establishes that $\Omega$ is a round disk and $u$ is a radial function $u=u(r)$ (\cite{se}).  This is a contradiction.
\item  After a change of coordinates, suppose that the critical point is $\o=(0,0)$. Then we deduce $u_1(\o)=u_2(\o)=0$. A new change of coordinates allows to assume $u_{12}(\o)=0$. Since $u$ is a maximum of $u$, we have $u_{11}(\o)\leq 0$ and $u_{22}(\o)\leq 0$. 

{\it Claim:} $u_{11}(\o)< 0$ and $u_{22}(\o)< 0$. 

The proof is by contradiction and suppose that $u_{11}(\o)=0$ (the same argument if $u_{22}(\o)=0$).  Here we follow the same notation as in the proof of Theorem  \ref{t1}. If the function $v^1=u_1$ is constant in $\Omega$, then $u$ depends only on the variable $x_2$ and the boundary condition (\ref{eq2}) is impossible. Thus $v^1$ is a non constant analytic function. Since $v^1$ vanishes at $\o$ as well as $v^1_1$ and $v^1_2$, the function $v^1$ vanishes at $\o$ with a finite order $m\geq 1$. Thus there exist at least two nodal lines of  $v^1$ which form an equiangular system in a neighbour of $\o$. We have proved in Theorem  \ref{t1} that this is impossible because there exists exactly one nodal   line of $v^1$.
\item Finally we prove that $\Phi(x;\alpha)$ can not attain its minimum at $\o$. We know $u_1(\o)=u_2(\o)=u_{12}(\o)=0$. We need the first and second partial derivatives of $\Phi$ at   $\o\in\Omega$. Following the notation employed in \cite[p. 197]{pp}, at the critical point $\o$ we have
$$ \Phi_i(\o,\alpha)=0,\quad \Phi_{ij}(\o;\alpha)=2\frac{g+2q^2g'}{\rho}u_{ik}u_{jk}+\alpha fu_{ij}.
$$
Hence,    and from (\ref{eq1}),  
\begin{eqnarray*}
\Phi_{11}(\o;\alpha)&=&\frac{2}{1-H}u_{11}(\o)^2+\frac{2\alpha}{u(\o)}u_{11}(\o)\\
\Phi_{12}(\o;\alpha)&=&0\\
\Phi_{22}(\o;\alpha)&=&\frac{2}{1-H}u_{22}(\o)^2+\frac{2\alpha}{u(\o)}u_{22}(\o).
\end{eqnarray*}
Because $\o$ is a minimum of   $\Phi(x;\alpha)$, we find that $\Phi_{11}(\o;\alpha)\geq 0$ and $\Phi_{22}(\o;\alpha)\geq 0$. Since $u_{11}(\o), u_{22}(\o)<0$ by  the previous item, and $1-H>0$,  
$$\frac{2}{1-H}u_{11}(\o)+\frac{2\alpha}{u(\o)}\leq 0$$
$$ \frac{2}{1-H}u_{22}(\o)+\frac{2\alpha}{u(\o)}\leq 0.$$
Then 
\begin{equation}\label{delta1}
u_{11}(\o)+u_{22}(\o)=\Delta u(\o)\leq -\frac{2\alpha(1-H)}{u(\o)}\cdot
\end{equation}
Finally, equation  (\ref{eq1}) at  $\o$ yields 
\begin{equation}\label{delta2}
\Delta u(\o)=\frac{-2(1-H)}{u(\o)}\cdot
\end{equation} 
Comparing (\ref{delta1}) and (\ref{delta2}), we conclude, $\alpha\leq 1$. Thus if $\alpha\in (1,2]$, we arrive to a contradiction and the theorem is proved.

We analyse the case $\alpha=1$. Denote by $\o_\alpha$  the   the minimum point  of $\Phi(x;\alpha)$. We have proved that $\o_\alpha$  lies in $\partial\Omega$ for all $\alpha\in (1,2]$. By continuity, the point $\o_1$ must be a boundary point, because on the contrary, $\Phi(x;\alpha)$ would be constant for some parameter $\alpha\in (1,2]$. This proves the result for $\alpha=1$ and the proof of Proposition \ref{t-min} is completed. 
\end{enumerate}
\end{proof}

\begin{remark} In case that $u$ is a radial solution, then $u$ can be expressed as  
$$u(r)=-Hm+\sqrt{m^2-r^2},\quad 0\leq r\leq R.$$
It is not difficult to see that  if we denote $u'=u'(r)$, then the functional
$$\Phi(x;\alpha)=\frac{1+u'^2}{(1-H\sqrt{1+u'^2})^2}u^{2\alpha}$$
is constant only when the parameter $\alpha$ is $\alpha=1$. 
\end{remark}

\begin{proof}[Proof of Theorem \ref{t3}]

First suppose that $u$ is not a radial solution. By the proof of Proposition \ref{t-min}, we know that $\Phi(x;1)$ attains its minimum at some point $Q\in\partial\Omega$. Then if $q_M=|Du|(Q)$, we have 
$$\frac{1+|Du|^2)}{(1-H\sqrt{1+|Du|^2})^2}u^2\geq \frac{1+q_M^2}{(1-H\sqrt{1+q_M^2})^2}a^2.$$
We evaluate this inequality at the only critical point $\o$, obtaining
\begin{equation}\label{uo}
\left(\frac{u_M}{a(1-H)}\right)^{2}\geq  \frac{1+q_M^2}{(1-H\sqrt{1+q_M^2})^2}\cdot
\end{equation}
On the other, $\partial\Phi(Q;1)/\partial\n\leq 0$ because $Q$ is the minimum of $\Phi(x;1)$. If $u_n$ and $u_{nn}$ denote  the first and second outward normal derivatives of $u$ along $\partial\Omega$, by the expression of $\Phi_i$ (see \cite[p. 197]{pp}),  we  deduce
\begin{equation}\label{uo2}
\frac{u_nu_{nn}}{(1+u_n^2)(1-H\sqrt{1+u_n^2})}+\frac{ u_n}{u}\leq 0\ \mbox{at $Q$}\cdot
\end{equation}
In normal coordinates, and taking into account that $u$  is constant along $\partial\Omega$, equation  (\ref{eq1}) along $\partial\Omega$ becomes 
$$\frac{u_{nn}}{(1+u_n^2)^{3/2}}+\frac{\kappa u_n}{\sqrt{1+u_n^2}}=\frac{-2}{u}\left(\frac{1}{\sqrt{1+u_n^2}}-H\right).$$
Combining this equation at $Q$ with (\ref{uo2}) and using that $u_n\leq 0$, 
$$\frac{-1}{a}\geq \frac{\kappa(Q)u_n(Q)}{1-H\sqrt{1+u_n^2(Q)}}\cdot$$
Hence, and as $\kappa(Q)\leq\kappa_0$,  
$$\frac{1}{a^2\kappa_0^2}\leq \frac{u_n^2(Q)}{(1-H\sqrt{1+u_n^2(Q)})^2}\cdot$$
As $|Du|^2=u_n^2$ at $Q$, we obtain from (\ref{uo})
$$\left(\frac{u_M}{a(1-H)}\right)^{2}\geq \frac{u_n^2(Q)}{(1-H\sqrt{1+u_n^2(Q)})^2}\geq \frac{1}{a^2\kappa_0^2},$$
proving the result.

Suppose now that $u$ is a radial solution. Then   $u(r)=c_0+\sqrt{m^2-r^2}$, where $m>0$, $c_0=-Hm$ and $0\leq r\leq R$. Since $m>R$,  
$$u_M=u(0)=(1-H) m>(1-H)R=\frac{1-H}{\kappa_0}\cdot$$
This proves the inequality (\ref{um2}) and completes the proof of Theorem \ref{t3}.
\end{proof}

\section*{Acknowledgements} The author has been partially
supported by MEC-FEDER
 grant no. MTM2017-89677-P.
 
 %%%%%%%%%%%%%%%%%%%%%%%%%%%%%%%%%%%%%%


\begin{thebibliography}{00}

\bibitem{ba} L. Barbu. On some estimates for a fluid surface in a short capillary tube. \emph{Appl. Math. Comp.} {\bf 219} (2013), 8192--8197.

\bibitem{be} L. Barbu, C. Enache. A minimum principle for a soap film problem in $\r^2$. \emph{Z. Angew. Math. Phys.} {\bf 64} (2013),  321--328.

\bibitem{bers} L. Bers. Local behavior of solutions of general elliptic equations. \emph{Comm. Pure Appl. Math.} {\bf 8} (1955), 473--496.

\bibitem{ch} S. Cheng. Eigenfunctions and nodal sets. \emph{Comment. Math. Helv.} {\bf 51} (1976) 43--55.

\bibitem{dl} M. Dajczer, J. H. de Lira.   Conformal Killing graphs with prescribed mean curvature. \emph{J. Geom. Anal.} {\bf 22} (2012),  780--799. 
 
\bibitem{el} C. Enache, R. L\'opez. Minimum principles and a priori estimates for some translating soliton type problems, \emph{Nonlinear Anal.} {\bf 187} (2019), 352--364.

\bibitem{gt} D. Gilbarg, N. S. Trudinger.   Elliptic Partial Differential Equations of Second Order. Second edition.   Springer-Verlag, Berlin, 1983.
 
\bibitem{li} J. H. de Lira. Radial graphs with constant mean curvature in the hyperbolic space. \emph{Geom. Dedicata} {\bf 93} (2002), 11--23. 

\bibitem{lo0} R. L\'opez. Constant mean curvature surfaces with boundary in hyperbolic space. \emph{Monatsh. Math.} {\bf 127} (1999),  155--169.

\bibitem{lo1} R. L\'opez.  Graphs of constant mean curvature in hyperbolic space. \emph{Ann. Global Anal. Geom.} {\bf 20} (2001),  59--75. 

\bibitem{lo} R. L\'opez. Constant Mean Curvature Surfaces with Boundary.
Springer Monographs in Mathematics. Springer Verlag. Berlin,  2013.

\bibitem{lo2} R. L\'opez. Uniqueness of critical points and maximum principles of the singular minimal surface equation. \emph{J. Differential Equations} {\bf 266} (2019),  3927--394.

\bibitem{lm} R.  L\'opez, S. Montiel. Existence of constant mean curvature graphs in hyperbolic space. \emph{Calc. Var.}   {\bf 8} (1999),  177--190.

\bibitem{ns} B. Nelli, J.  Spruck.
On the existence and uniqueness of constant mean curvature hypersurfaces in hyperbolic space. Geometric analysis and the calculus of variations, 253--266, Int. Press, Cambridge, MA, 1996. 

\bibitem{ni} L. Nirenberg. On nonlinear partial differential equations and H\"{o}lder continuity. 
\emph{Comm. Pure Appl. Math.} {\bf 6} (1953), 103--156.


\bibitem{pp} L. E. Payne, G. A. Philippin. Some maximum principles for nonlinear elliptic equations in divergence form with applications to capillary surfaces and to surfaces of constant mean curvature. \emph{Nonlinear Anal.} {\bf 3} (1979),  193--211.

    \bibitem{ph}  G. A. Philippin. A minimum principle for the problem of torsional creep. \emph{J. Math. Anal. Appl.} {\bf 68}  (1979),  526--535.

\bibitem{st} R.  Sa Earp, E. Toubiana.   Existence and uniqueness of minimal graphs in hyperbolic space. \emph{Asian J. Math.} {\bf 4} (2000),   669--693. 

\bibitem{sa} S. Sakaguchi. Uniqueness of critical point of the solution to the prescribed constant mean curvature equation over convex domain in $R^2$. Recent topics in nonlinear PDE, IV (Kyoto, 1988), 129--15, North-Holland Math. Stud.  160, North-Holland, Amsterdam, 1989.


\bibitem{se} J. Serrin.  A symmetry problem in potential theory. \emph{Arch. Rational Mech. Anal.} {\bf 43} (1971), 304--318.
 \end{thebibliography}
 \end{document}